\newcommand{\bel}[1]{\begin{equation}\label{#1}}
\newcommand{\be}{\begin{equation}}
\newcommand{\ba}{\begin{eqnarray}}
\newcommand{\ea}{\end{eqnarray}}
\newcommand{\qe}{\end{equation}}
\newcommand{\R}{{\mathbb R}}
\newcommand{\vol}{{\mathrm{vol}}}
\newcommand{\CCD}{\mathrm{CD}}
\newcommand{\Ric}{\mathrm{Ric}}
\newcommand{\Hmm}[1]{\leavevmode{\marginpar{\tiny%
$\hbox to 0mm{\hspace*{-0.5mm}$\leftarrow$\hss}%
\vcenter{\vrule depth 0.1mm height 0.1mm width \the\marginparwidth}%
\hbox to
0mm{\hss$\rightarrow$\hspace*{-0.5mm}}$\\\relax\raggedright #1}}}
\theoremstyle{theorem}
\newtheorem{thm}{Theorem}
\theoremstyle{example}
\theoremstyle{conj}
\theoremstyle{corollary}
\theoremstyle{lemma}
\newtheorem{lemma}[thm]{Lemma}
\theoremstyle{definition}
\theoremstyle{proof}
\theoremstyle{remark}
\begin{document}

\title[Liouville theorem for bounded harmonic functions]{Liouville theorem for bounded harmonic functions on manifolds and graphs satisfying non-negative curvature dimension condition}
\author{Bobo Hua}
\email{bobohua@fudan.edu.cn}
\address{School of Mathematical Sciences, LMNS, Fudan University, Shanghai 200433, China; Shanghai Center for Mathematical Sciences, Fudan University, Shanghai 200433, China
}

\begin{abstract} Brighton \cite{Brighton13} proved the Liouville theorem for bounded harmonic functions on weighted manifolds satisfying non-negative curvature dimension condition, i.e. $\CCD(0,\infty).$ In this paper, we provide a new proof of this result by using the reverse Poincar\'e inequality. Moreover, we adopt this approach to prove the Liouville theorem for bounded harmonic functions on graphs satisfying the $\CCD(0,\infty)$ condition.
\end{abstract}
\maketitle

\section{Introduction}\label{s:introduction}
Yau \cite{Yau75} proved that positive harmonic functions are constant on a complete, noncompact Riemannian manifold with non-negative Ricci curvature. As a corollary, any bounded harmonic function is constant. These are called Liouville theorems for harmonic functions, regarded as the generalizations of classical Liouville's theorem for bounded holomorphic functions on the complex plane. Later, Cheng and Yau \cite{ChengYau75} obtained a related quantitative result, so-called Cheng-Yau gradient estimate for positive harmonic functions. This type of gradient estimate was generalized to positive solutions of heat equations by Li and Yau \cite{LiYau86}.
The space of positive (bounded resp.) harmonic functions corresponds to the Martin (Poisson resp.) boundary at infinity for Brownian motion or random walks in the probability theory, see e.g. \cite{Kaimanovich96,Woess09}. In this terminology, the Liouville theorem means the triviality of the corresponding boundary at infinity. Liouville theorems for harmonic functions have received much attention in the literature, to cite a few \cite{KaimanovichVershik83,Sullivan83,Anderson83,LiTam87,Lyons87,Grigoryan90,Benjamini91,Grigoryan91,Saloff92,WangFY02,Erschler04,Li12,Brighton13}.
In this paper, we study Liouville theorems for bounded harmonic functions on weighted manifolds and graphs.

We first recall some facts on Riemannian manifolds.
Let $(M,g)$ be a $d$-dimensional complete Riemannian manifold with the Riemannian metric $g$, $\Delta$ be the Laplace-Beltrami operator on $M$ and $C_0^\infty(M)$ be the space of compactly supported smooth functions on $M.$ 
As is well-known, the Ricci curvature tensor of a manifold is bounded below by $Kg$ and the dimension $d$ is at most $n,$ $K\in \R$ and $n\in(0,\infty],$ if and only if the following Bochner inequality holds,
\begin{equation}\label{eq:eqb}\frac12\Delta |\nabla f|^2\geq \frac{1}{n}(\Delta f)^2+\langle \nabla f,\nabla \Delta f\rangle+K|\nabla f|^2,\quad\forall \ f\in C_0^\infty(M),\end{equation} where $\nabla\cdot$ is the Riemannian gradient of a function. In a  general setting, we consider a Riemannian manifold $(M,g)$ equipped with a measure $e^{-V(x)}d\vol(x),$ where $V$ is a smooth function on $M,$ and call the triple $(M,g,e^{-V}d\vol)$ a weighted manifold. Weighted manifolds, also called smooth metric measure spaces, have been extensively studied in the literature, see e.g. \cite{Morgan05,WeiW09,Grigoryan09book,MunteanuW11,MunteanuW12}. The associated $V$-Laplacian is defined as
$$\Delta_V f= \Delta f- \langle\nabla V, \nabla f \rangle, \quad\forall \ f\in C_0^\infty(M).$$ A function $u$ is called $V$-harmonic if $\Delta_V u=0$ on $M.$ The weighted Ricci curvature is defined as 
$$\Ric_V=\Ric +\nabla^2 V,$$ where $\nabla^2\cdot$ is the Hessian of a function.
Yau's Liouville theorem for bounded harmonic functions was generalized to weighted manifolds by Brighton \cite{Brighton13}.
\begin{thm}[\cite{Brighton13}]\label{thm:brighton} Let $(M,g,e^{-V}d\vol)$ be a complete weighted manifold satisfying $\Ric_V\geq 0.$ Then any bounded $V$-harmonic function is constant.
\end{thm}
Brighton adopted the techniques for Cheng-Yau's gradient estimate \cite{ChengYau75} to establish the following:
For any $q\in M,$ $R\geq 1$ and any positive $V$-harmonic function $u$ on $B_R(q),$ the ball of radius $R$ centered at $q$, we have 
$$|\nabla u|(q)\leq \frac{C}{\sqrt{R}}\sup_{B_{R}(q)}u,$$ where $C$ is a constant independent of $R.$ The theorem follows from passing to the limit $R\to \infty.$

In this paper, we will give a new proof of Theorem~\ref{thm:brighton} using semigroup approaches.
For general Markov semigroups, Bakry and \'Emery \cite{BakryEmery85,Bakry87,BakryGentilLedoux} introduced the so-called $\Gamma$-calculus, and defined the curvature dimension condition mimicking \eqref{eq:eqb}, denoted by $\CCD(K,n)$, and derived various interesting analytic properties of the semigroups under these conditions. For a weighted manifold $(M,g,e^{-V}d\vol),$ we denote by $P_t=e^{t\Delta_V}$ the associated heat semigroup. In this setting, the $\CCD(0,\infty)$ condition means that $\Ric_V\geq 0.$ It is well-known \cite{BakryGentilLedoux,Wang10} that $\Ric_V\geq 0$ is equivalent to
\begin{equation}\label{eq:gradient}|\nabla P_t f|^2\leq P_t|\nabla f|^2, \quad\forall \ f\in C_0^\infty(M), t>0,\end{equation} or
\begin{equation}\label{eq:reverse Poincare}
2t |\nabla P_t f|^2\leq P_t(f^2)-(P_tf)^2, \quad\forall \ f\in C_0^\infty(M), t>0,\end{equation} where the latter is called the reverse Poincar\'e inequality. We will use the reverse Poincar\'e inequality to give a proof of Theorem~\ref{thm:brighton} in Section~\ref{sec:Brighton}. The {\textbf{heuristic argument}} is as follows: Let $f$ be a bounded harmonic function on $M.$ We may apply \eqref{eq:reverse Poincare} to $f,$ to be justified by proper approximation. By the semigroup property and the boundedness of $f$, the right hand side of \eqref{eq:reverse Poincare} is bounded above by a constant $C$ independent of $t.$ Moreover, by the harmonicity of $f,$ $P_t f=f,$ which implies that
\begin{equation}\label{eq:argument}|\nabla f|^2=|\nabla P_t f|^2\leq \frac{C}{2t}\to 0,\quad t\to \infty.\end{equation} This yields that $|\nabla f|\equiv 0$ and hence $f$ is constant.

In the following we introduce the setting of graphs, see \cite{HuaLin17} for notation, and adapt the previous argument to the discrete case.
Let $(V,E)$ be a connected, undirected, combinatorial graph with the set of vertices $V$ and the set of edges $E.$  We say $x,y\in V$ are neighbors, denoted by $x\sim y,$ if there is an edge connecting $x$ and $y.$ The graph is called \emph{locally finite} if each vertex has finitely many neighbors. In this paper, all graphs we consider are locally finite. We assign a weight $m$ to each vertex, $m: V\to (0,\infty),$ and a weight $\mu$ to each edge, $$\mu:E\to (0,\infty),\ E\ni \{x,y\}\mapsto \mu_{xy}=\mu_{yx},$$ and refer to the quadruple $G=(V,E,m,\mu)$ as a \emph{weighted graph}. We denote by $C_0(V)$ the set of finitely supported functions on $V,$ by $\ell^p(V,m),$ $p\in [1,\infty],$ the $\ell^p$ spaces of functions on $V$ with respect to the measure $m,$ and by $\|\cdot\|_{\ell^p(V,m)}$ the $\ell^p$ norm of a function.

For any weighted graph $G=(V,E,m,\mu),$ we define the \emph{Laplacian} $\Delta$ as
$$\Delta f(x)=\frac{1}{m(x)}\sum_{y\in V:y\sim x}\mu_{xy}(f(y)-f(x)),\quad \forall\ f:V\to\R.$$ It is proven by Keller and Lenz \cite{KellerLenz12} that $\Delta$ is a bounded linear operator on $\ell^2(V,m)$ if and only if \begin{equation}\label{eq:bounded}\sup_{x\in V}\frac{1}{m(x)}\sum_{y\in V:y\sim x}\mu_{xy}<\infty.\end{equation} From another point of view, for a weighted graph $G$, it associates with a Dirichlet form with respect to the Hilbert space $\ell^2(V,m),$
\begin{eqnarray*}\label{e:dirichlet form}Q:&&D(Q)\times D(Q)\to \R \nonumber\\ && (f,g)\mapsto \frac12\sum_{x,y\in V:x\sim y}\mu_{xy}(f(y)-f(x))(g(y)-g(x)),\end{eqnarray*} where the form domain $D(Q)$ is defined as the completion of $C_0(V)$ under the norm $\|\cdot\|_Q$ given by $$\|f\|^2_{Q}=\|f\|_{\ell^2(V,m)}^2+\frac12\sum_{x,y\in V:x\sim y}\mu_{xy}(f(y)-f(x))^2, \ \ \forall f\in C_0(V),$$ see \cite{KellerLenz12}. We denote by $\mathcal{L}$ the infinitesimal generator of the Dirichlet form $Q$ and by $P_t=e^{t\mathcal{L}}$ the corresponding $C_0$-semigroup.
For a locally finite graph, the generator $\mathcal{L}$ coincides with the Laplacian $\Delta$ on the domain of generator $D(\mathcal{L})$ which contains $C_0(V),$ i.e. $\mathcal{L}f=\Delta f,$ for any $f\in D(\mathcal{L})$, see \cite[Theorem~6~and~9]{KellerLenz12}.

Note that the weights $\mu$ and $m$ determine the properties of the Laplacian and the semigroup. Given the edge weights $\mu,$ typical choices of $m$ are of particular interest: \begin{itemize}\item $m(x)=\sum_{y:y\sim x}\mu_{xy}$ for any $x\in V$ and the associated Laplacian is called the normalized Laplacian.
\item $m\equiv1$ and the Laplacian is called the physical (or combinatorial) Laplacian.
\end{itemize}

Now we introduce the $\Gamma$-calculus on graphs following \cite{BakryGentilLedoux,LinYau10}.
The ``carr\'e du champ" operator $\Gamma$ is defined as
$$\Gamma(f,g)(x):=\frac12(\Delta(fg)-f\Delta g-g\Delta f)(x), \quad \forall f,g:V\to\R, x\in V.$$ For simplicity, we write $\Gamma(f):=\Gamma(f,f).$ The iterated $\Gamma$ operator, $\Gamma_2$, is defined as
$$\Gamma_2(f):=\frac{1}{2}\Delta \Gamma(f)-\Gamma(f,\Delta f).$$

A Riemannian manifold $(M,g)$ is complete if and only if there exists a non-decreasing sequence of non-negative functions $\{\eta_k\}_{k=1}^\infty$ in $C_0^\infty(M)$ such that \begin{equation*}\lim_{k\to\infty}\eta_k=\mathds{1}\ \mathrm{and}\ \ |\nabla\eta_k|\leq \frac{1}{k},\end{equation*} where $\mathds{1}$ is the constant function $1$ on $M.$ Motivated by this, a weighted graph $G=(V,E,m,\mu)$ is called \emph{complete} if there exists a non-decreasing sequence of non-negative finitely supported functions $\{\eta_k\}_{k=1}^\infty$ such that \begin{equation}\label{d:complete}\lim_{k\to\infty}\eta_k=\mathds{1}\ \mathrm{and}\ \ \Gamma(\eta_k)\leq \frac{1}{k}.\end{equation} Note that  the weights $\mu$ and $m$ are crucial to the completeness of a weighted graph. This condition was defined for Markov diffusion semigroups in \cite[Definition~3.3.9]{BakryGentilLedoux} and adapted to graphs in \cite{HuaLin17}. A large class of graphs possessing appropriate intrinsic metrics have been shown to be complete, see \cite[Theorem~2.8]{HuaLin17}, where $\eta_k$ are constructed via distance functions to some vertices. In particular, graphs with bounded Laplacians are always complete.

In this paper, we say that a weighted graph $G$ satisfies the assumption $(A)$ if one of the following holds:
 \begin{enumerate}
  \item[$(A_1)$]\label{ass:A1} The Laplacian $\Delta$ is bounded on $\ell^2(V,m),$ i.e. \eqref{eq:bounded} holds.
  \item[$(A_2)$]\label{ass:A2} $G$ is complete and $m$ is non-degenerate, i.e. $\inf_{x\in V}m(x)>0.$
  \end{enumerate}
Note that normalized Laplacians are always bounded, hence satisfy $(A_1);$ physical Laplacians satisfy $(A_2)$ provided that they are complete.

For our purposes, we need the curvature dimension condition on graphs, which was initiated in \cite{BakryEmery85,Bakry87} on manifolds (or Markov diffusion semigroups) and introduced to graphs by \cite{Schmuck96,LinYau10}. We say that a graph $G=(V,E,m,\mu)$ satisfies the $\CCD(K,n)$ condition, for $K\in \R$ and $n\in (0,\infty],$ if
  $$\Gamma_2(f)(x)\geq \frac1n (\Delta f)^2(x)+K\Gamma (f)(x),\quad \forall\ f:V\to\R,\ x\in V.$$
Now we are ready to state our main result.
\begin{thm}\label{thm:main1} Let $G=(V,E,m,\mu)$ be a weighted graph satisfying the assumption $(A)$ and the $\CCD(0,\infty)$ condition. Then any bounded harmonic function on $G$ is constant.
\end{thm}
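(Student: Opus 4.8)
The plan is to render rigorous the heuristic argument around \eqref{eq:argument}, the two essential ingredients being a discrete reverse Poincar\'e inequality derived from $\CCD(0,\infty)$ and the justification that $P_tf=f$ for a bounded harmonic function. First I would record the discrete gradient estimate: under the assumption $(A)$, the condition $\CCD(0,\infty)$ implies
\[
\Gamma(P_tf)\leq P_t\Gamma(f),\qquad t>0,
\]
for $f$ in a suitable core (say $C_0(V)$). The standard $\Gamma$-calculus proof shows that $s\mapsto P_s\Gamma(P_{t-s}f)$ has derivative $2P_s\Gamma_2(P_{t-s}f)$, which is non-negative precisely because $\CCD(0,\infty)$ reads $\Gamma_2\geq 0$; monotonicity of this quantity then gives the estimate. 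The role of $(A)$ is to make the semigroup calculus licit: under $(A_1)$ the generator $\mathcal{L}=\Delta$ is bounded so every manipulation is automatic, while under $(A_2)$ the completeness functions $\eta_k$ of \eqref{d:complete} serve as cut-offs to control the sums arising when commuting $\Delta$, $\Gamma$ and $P_s$.

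From the gradient estimate I would obtain the reverse Poincar\'e inequality by interpolation. Fix $t>0$, set $g_s:=P_{t-s}f$ and $\Phi(s):=P_s(g_s^2)$ for $s\in[0,t]$. Using $\partial_s g_s=-\Delta g_s$, $\partial_s P_s=\Delta P_s$ and the identity $2\Gamma(g)=\Delta(g^2)-2g\Delta g$, one computes
\[
\Phi'(s)=P_s\big(\Delta(g_s^2)-2g_s\Delta g_s\big)=2P_s\Gamma(P_{t-s}f).
\]
Applying the gradient estimate with time $s$ to $P_{t-s}f$ gives $P_s\Gamma(P_{t-s}f)\geq \Gamma(P_sP_{t-s}f)=\Gamma(P_tf)$, so integrating $\Phi'$ over $[0,t]$ yields
\[
P_t(f^2)-(P_tf)^2=\Phi(t)-\Phi(0)=\int_0^t 2P_s\Gamma(P_{t-s}f)\,ds\geq 2t\,\Gamma(P_tf),
\]
the discrete analogue of \eqref{eq:reverse Poincare}.

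It remains to feed a bounded harmonic function $f$ into this inequality. Since $\Delta f=0$ and $f$ is bounded, I would argue that the sub-Markovian semigroup extends to $\ell^\infty(V,m)$ and that harmonicity is preserved, so that $P_tf=f$ for all $t>0$. The right-hand side is then controlled uniformly in $t$: by the Markov property $P_t(f^2)\leq \|f\|_{\ell^\infty(V,m)}^2$ and $(P_tf)^2=f^2\geq 0$, whence
\[
2t\,\Gamma(f)=2t\,\Gamma(P_tf)\leq P_t(f^2)-(P_tf)^2\leq \|f\|_{\ell^\infty(V,m)}^2.
\]
Letting $t\to\infty$ forces $\Gamma(f)\equiv 0$; since $\Gamma(f)(x)=\frac{1}{2m(x)}\sum_{y\sim x}\mu_{xy}(f(y)-f(x))^2$, this means $f(y)=f(x)$ whenever $y\sim x$, and connectedness of $G$ makes $f$ constant.

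The step I expect to be the genuine obstacle is not the clean algebra above but the two approximation arguments: extending the reverse Poincar\'e inequality from the core $C_0(V)$ to the bounded, non-$\ell^2$ function $f$, and simultaneously justifying $P_tf=f$. Under $(A_1)$ this is routine, since $\Delta$ is a bounded operator and the semigroup is a norm-convergent exponential series acting on all of $\ell^\infty(V,m)$. Under $(A_2)$ the difficulty is real: one must use the completeness functions $\eta_k$ to truncate $f$, pass the inequality to $\eta_k f$, and show that the error terms produced by $\Gamma(\eta_k)\leq 1/k$ together with the non-degeneracy $\inf_x m(x)>0$ vanish as $k\to\infty$, so that the limiting inequality holds for $f$ itself.
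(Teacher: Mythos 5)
Your overall strategy is exactly the paper's: derive the reverse Poincar\'e inequality $2t\,\Gamma(P_tf)\leq P_t(f^2)-(P_tf)^2$ from $\CCD(0,\infty)$ via the interpolation $\phi(s)=P_s\bigl((P_{t-s}f)^2\bigr)$, show $P_tf=f$ for bounded harmonic $f$, and let $t\to\infty$ to kill $\Gamma(f)$. The first block of your argument is essentially the proof of Theorem~\ref{thm:main2}, which the paper isolates as a separate statement, and your closing computation matches \eqref{eq:argument}. The extension of inequality $(d)$ from $C_0(V)$ to a bounded $f$ via the cut-offs $\eta_k$ is also the paper's route, though the paper needs no estimate of ``error terms produced by $\Gamma(\eta_k)\leq 1/k$'': it simply applies $(d)$ to $f_k=f\eta_k$, passes the right-hand side to the limit by dominated convergence (using only boundedness of $f$ and $\eta_k\to\mathds{1}$ pointwise), and passes the left-hand side to the limit by local finiteness, since $\Gamma(P_tf_k)(x)$ involves finitely many terms. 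So that part of your plan is sound, if slightly more complicated than necessary.

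The genuine gap is the step $P_tf=f$ under $(A_2)$. You write that ``the sub-Markovian semigroup extends to $\ell^\infty(V,m)$ and harmonicity is preserved, so that $P_tf=f$,'' but this inference is false in general: on a stochastically incomplete graph the constant function $\mathds{1}$ is bounded and harmonic, yet $P_t\mathds{1}<\mathds{1}$. Preservation of harmonicity (i.e.\ $\Delta P_tf=P_t\Delta f=0$) does not force $P_tf=f$. The correct argument, which is the content of Lemma~\ref{lem:lem1}, is that the gradient estimate $(b)$ of Theorem~\ref{thm:main2} --- hence ultimately the curvature condition itself --- implies stochastic completeness of the semigroup (\cite[Theorem~1.2]{HuaLin17}); stochastic completeness gives uniqueness of bounded solutions of the heat equation $\partial_tu=\Delta u$; and then $u(t,x)=f(x)$ and $u(t,x)=P_tf(x)$ are two bounded solutions with the same initial data, so they coincide. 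You correctly flag this step as an obstacle, but the route you propose for closing it does not work, and you miss that the curvature hypothesis is used here a second time, not merely to produce the reverse Poincar\'e inequality. Under $(A_1)$ your remark is fine, since a bounded generator gives $P_tf=\sum_k t^k\Delta^kf/k!=f$ directly.
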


We remark that the Liouville theorem for bounded harmonic functions on graphs can be derived from the Li-Yau gradient estimate or the Harnack inequality. They had been obtained by \cite{BHLLMY15,HornLinLiuYau14} under other curvature dimension conditions, $\mathrm{CDE}(0,n)$ and $\mathrm{CDE'}(0,n),$ for bounded Laplacians and finite $n.$ It was proven by M\"unch \cite{Munch17} that $\mathrm{CDE'}(0,n)$ implies that $\CCD(0,n).$ So that our assumption of $\CCD(0,\infty)$ is much weaker. The assumption $(A)$ is essential for the following result, which was proved for finite graphs by \cite{LiuP14}, for graphs with bounded Laplacians by \cite{LinLiu15}, and partially for graphs with unbounded Laplacians satisfying $(A_2)$ by \cite{HuaLin17} and \cite{GongLin15}.
\begin{thm}\label{thm:main2} For a weighted graph $G=(V,E,m,\mu)$ satisfying the assumption $(A),$ the following are equivalent:
\begin{enumerate}[(a)]
   \item $G$ satisfies $\CCD(0,\infty).$
    \item
    $\Gamma(P_t f)\leq P_t(\Gamma (f)),\quad\quad\quad\quad\quad\  \forall\ f\in C_0(V), t>0.$
    \item
    $P_t(f^2)-(P_t f)^2\leq 2t P_t(\Gamma(f)),\quad \forall\ f\in C_0(V), t>0.$
    \item
    $2t \Gamma(P_tf)\leq P_t(f^2)-(P_t f)^2,\quad\ \  \forall\ f\in C_0(V), t>0.$
\end{enumerate}
\end{thm}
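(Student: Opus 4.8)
The plan is to prove the chain of equivalences (a) $\Rightarrow$ (b) $\Rightarrow$ (c) $\Rightarrow$ (d) $\Rightarrow$ (a), since each implication is technically cleaner than a direct pairwise comparison. The central analytic tool throughout is the interpolation (or Duhamel-type) identity: for a fixed $t>0$ and a suitable smooth function $f$, one differentiates along the semigroup flow. Specifically, to connect (a) with (b), I would fix $f\in C_0(V)$ and $t>0$, and for $s\in[0,t]$ consider the quantity
\begin{equation*}
\Phi(s):=P_s\big(\Gamma(P_{t-s}f)\big).
\end{equation*}
A direct computation using $\frac{d}{ds}P_s=\mathcal{L}P_s=\Delta P_s$ and the product structure of $\Gamma$ gives
\begin{equation*}
\Phi'(s)=P_s\Big(\Delta\,\Gamma(P_{t-s}f)-2\,\Gamma(P_{t-s}f,\Delta P_{t-s}f)\Big)=2\,P_s\big(\Gamma_2(P_{t-s}f)\big).
\end{equation*}
Under $\CCD(0,\infty)$, the integrand $\Gamma_2\geq 0$ is non-negative, so $\Phi$ is non-decreasing; evaluating at the endpoints $s=0$ and $s=t$ yields $\Gamma(P_tf)=\Phi(0)\leq\Phi(t)=P_t(\Gamma(f))$, which is exactly (b).

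Next, for (b) $\Rightarrow$ (c), I would fix $f$ and $t$ and integrate the identity $\frac{d}{ds}P_s(u^2)=P_s(\Delta(u^2))$ with $u=P_{t-s}f$. Using the definition of $\Gamma$, one has $\Delta(u^2)=2u\Delta u+2\Gamma(u)$, and combined with $\frac{d}{ds}(P_s u)=0$ along the flow one gets that the derivative of $P_s\big((P_{t-s}f)^2\big)$ equals $2\,P_s(\Gamma(P_{t-s}f))$. Integrating from $0$ to $t$ produces the pointwise identity
\begin{equation*}
P_t(f^2)-(P_tf)^2=2\int_0^t P_s\big(\Gamma(P_{t-s}f)\big)\,ds,
\end{equation*}
and then applying (b) inside the integral, $P_s(\Gamma(P_{t-s}f))\leq P_s P_{t-s}(\Gamma(f))=P_t(\Gamma(f))$, bounds the integral by $2tP_t(\Gamma(f))$, giving (c). For (c) $\Rightarrow$ (d), I would use the same variance identity but bound the integrand from below instead: by the monotonicity furnished again from the sign of $\Gamma_2$ (equivalently a reverse of the flow argument) one shows $s\mapsto P_s(\Gamma(P_{t-s}f))$ is monotone, so its integral dominates $t$ times its value at one endpoint, yielding $2t\,\Gamma(P_tf)\leq P_t(f^2)-(P_tf)^2$. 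Finally, (d) $\Rightarrow$ (a) is the infinitesimal/differential reversal: dividing (d) by $2t$ and expanding both sides in $t$ via the Taylor expansion $P_t=I+t\Delta+\tfrac{t^2}{2}\Delta^2+o(t^2)$, the leading-order terms match and the first non-trivial coefficient comparison reproduces $\Gamma_2(f)\geq 0$ at each vertex, which is $\CCD(0,\infty)$.

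The main obstacle I anticipate is \emph{not} the algebraic manipulations but the \emph{justification of the semigroup calculus}, namely that the formal differentiations $\frac{d}{ds}\Phi(s)=2P_s(\Gamma_2(P_{t-s}f))$ and the interchange of $\Delta$ with $P_s$ are rigorous. This is precisely where the assumption $(A)$ enters. Under $(A_1)$ the Laplacian $\Delta$ is bounded on $\ell^2(V,m)$, so $P_t=e^{t\Delta}$ is norm-continuous, everything commutes, and $P_t$ maps $C_0(V)$ into a space on which $\Delta$ acts freely with all manipulations legitimate. Under $(A_2)$ the Laplacian may be unbounded, and here one must control boundary terms in the summation-by-parts underlying $\frac{d}{ds}\Phi$: the completeness condition \eqref{d:complete} supplies the cut-off functions $\eta_k$ with $\Gamma(\eta_k)\leq \tfrac1k$, and together with the non-degeneracy $\inf_x m(x)>0$ these let one approximate and pass to the limit, discarding the error terms in the integration by parts. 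Establishing these approximation estimates carefully — ensuring $P_{t-s}f$ and its relevant $\Gamma$- and $\Gamma_2$-expressions lie in the correct function classes so that the differentiation under $P_s$ is valid — is the technical heart of the proof, and is exactly the content supplied by the references \cite{HuaLin17,GongLin15} for the unbounded case.
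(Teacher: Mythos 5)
Your overall strategy (semigroup interpolation along $s\mapsto P_s(\,\cdot\,(P_{t-s}f))$, plus a Taylor expansion at $t\to 0^+$ for the converse) is the standard one and matches the paper in spirit: the paper likewise proves $(a)\Rightarrow(d)$ by differentiating $\phi(s)=P_s\bigl((P_{t-s}f)^2\bigr)$, obtains $(d)\Rightarrow(a)$ by the infinitesimal expansion, and outsources $(a)\Leftrightarrow(b)$, $(a)\Leftrightarrow(c)$ and the bounded case $(A_1)$ to \cite{HuaLin17}, \cite{GongLin15} and \cite{LinLiu15}. However, your proof has a genuine logical gap: the step you label $(c)\Rightarrow(d)$ does not use $(c)$ at all. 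The monotonicity of $s\mapsto P_s\bigl(\Gamma(P_{t-s}f)\bigr)$ that you invoke comes from the sign of $\Gamma_2$, i.e.\ from $(a)$, so what you have actually proved there is $(a)\Rightarrow(d)$. Your chain therefore collapses to $(a)\Rightarrow(b)\Rightarrow(c)$ together with $(a)\Leftrightarrow(d)$, and nothing returns from $(b)$ or $(c)$ to $(a)$: the equivalence of $(b)$ and $(c)$ with the other two conditions is not established. The fix is routine --- apply to $(c)$ (or to $(b)$) the same Taylor expansion at $t\to 0^+$ that you use for $(d)\Rightarrow(a)$; comparing the coefficients of $t^2$ in $P_t(f^2)-(P_tf)^2\leq 2tP_t(\Gamma(f))$ again yields $\Gamma_2(f)\geq 0$ --- but as written the cycle does not close. (For comparison, the paper avoids this issue by deducing $(d)$ from $(b)$ via $P_s\Gamma(P_{t-s}f)\geq \Gamma(P_sP_{t-s}f)=\Gamma(P_tf)$ rather than via monotonicity of the integrand.)

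A second, smaller point: you correctly identify that justifying the semigroup calculus under $(A_2)$ is the technical heart, but you leave it entirely to the references. The one piece the paper actually carries out itself is exactly this: to differentiate $\phi(s)=P_s\bigl((P_{t-s}f)^2\bigr)$ one needs the interchange $\Delta P_s(P_{t-s}f)^2=P_s\Delta(P_{t-s}f)^2$, which the paper secures by showing $(P_tf)^2\in D(\mathcal{L})$, using the contraction $\|\Gamma(P_tf)\|_{\ell^1(V,m)}\leq\|\Gamma(f)\|_{\ell^1(V,m)}$ from \cite{HuaLin17} together with the inclusion $\ell^1(V,m)\subset\ell^2(V,m)$ furnished by $\inf_{x}m(x)>0$. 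If your write-up is meant to be self-contained beyond the cited equivalences, this domain argument (or an equivalent one) must appear explicitly rather than being gestured at.
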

%The general results on equivalent conditions for the $\CCD(K,n)$ condition, $K\in \R$ and $n\in (0,\infty],$ can be obtained as in \cite{Wang10,LinLiu15}, which are omitted here. We will prove Theorem~\ref{thm:main1} using $(d)$ in Theorem~\ref{thm:main2} following the argument in \eqref{eq:argument}.

%We end the introduction by proposing a folklore conjecture on Liouville theorem for positive harmonic functions.
%\begin{conj} Let $G=(V,E,m,\mu)$ be a weighted graph satisfying the assumption $(A)$ and the $\CCD(0,\infty)$ condition.
%Then any positive harmonic function on $G$ is constant.
%\end{conj}

The paper is organized as follows: We prove Theorem~\ref{thm:brighton} in the next section. The last section is devoted to the proofs of Theorem~\ref{thm:main1} and Theorem~\ref{thm:main2}.

\section{Proof of Theorem~\ref{thm:brighton}}\label{sec:Brighton}
In this section, we prove Theorem~\ref{thm:brighton}.

\begin{lemma}\label{lem:ptf}  Let $(M,g,e^{-V}d\vol)$ be a complete weighted manifold satisfying $\Ric_V\geq 0,$ $f$ be a bounded $V$-harmonic function. Then $P_tf=f.$
\end{lemma}
\begin{proof}By the gradient estimate \eqref{eq:gradient}, the semigroup $P_t$ is stochastically complete, see e.g. \cite[Theorem~3.3.23 and Corollary~3.2.2]{BakryGentilLedoux}. So that any bounded solution to the heat equation $\partial_t u=\Delta u$ is unique, see \cite[Theorem 8.18]{Grigoryan09book}.
Note that by the harmonicity of $f,$ $u(t,x)=f(x)$ solves the heat equation with initial data $f$ which is bounded for all $t\geq 0.$ Hence $P_t f=u=f.$ 
\end{proof}

\begin{proof}[Proof of Theorem~\ref{thm:brighton}]
Let $(M,g,e^{-V}d\vol)$ be a complete weighted manifold satisfying $\Ric_V\geq 0,$ $f$ be a bounded $V$-harmonic function. Let $\{\eta_k\}_{k=1}^\infty$ be a non-decreasing sequence of non-negative compactly supported  smooth functions such that $$\lim_{k\to\infty}\eta_k=\mathds{1}.$$ Set $f_k:=f\cdot \eta_k,$ for $k\geq1.$ By the dominated convergence theorem, one can show that for any $t>0,$
$$P_t f_k\to P_t f, \quad k\to \infty.$$ By the reverse Poincar\'e inequality, \eqref{eq:reverse Poincare},
$$2t|\nabla P_tf_k|^2\leq P_t(f_k^2)-(P_t f_k)^2\leq C^2,$$ where $C=\|f\|_{\infty}.$ Hence
\begin{equation}\label{eq:part1}|\nabla P_t f_k|\leq \frac{C}{\sqrt{2t}}.\end{equation} For any fixed $x,y\in M,$ let $\gamma_{xy}(s),$ $s\in [0,d(x,y)]$, be the shortest geodesic connecting $x$ and $y$ parametrized by the arc length. Then by \eqref{eq:part1},
\begin{eqnarray*}|P_t f_k(y)-P_t f_k(x)|&\leq &\int_0^{d(x,y)}\left|\frac{d}{ds} P_tf_k(\gamma_{xy}(s))\right|ds\\
&\leq& d(x,y)\frac{C}{\sqrt{2t}}.
\end{eqnarray*} Passing to the limit $k\to\infty,$ and applying Lemma~\ref{lem:ptf}, we have
$$|f(y)-f(x)|=|P_tf(y)-P_tf(x)|\leq d(x,y)\frac{C}{\sqrt{2t}}.$$
Moreover, letting $t\to\infty,$ we conclude that $f(x)=f(y)$ for any $x,y\in M.$ This proves the theorem.

\end{proof}

\section{Proof of Theorem~\ref{thm:main1}}
In this section, we will prove Theorem~\ref{thm:main1}. For that purpose, we first prove Theorem~\ref{thm:main2}.

\begin{proof}[Proof of Theorem~\ref{thm:main2}] For bounded Laplacians, the theorem has been proved in \cite[Theorem~3.1]{LinLiu15}. It suffices to prove the result under the assumption of $(A_2).$
The equivalence of $(a)$ and $(b)$ was obtained in \cite[Theorem~1.1]{HuaLin17}. The equivalence of $(a)$ and $(c)$ was proved in \cite[Theorem~3.2]{GongLin15}. 

We prove $(a)\Longrightarrow(d)$ following \cite[Theorem~1.1]{Wang10} or \cite[Theorem~3.1]{LinLiu15}. The reverse direction is omitted here since it follows verbatim. Let $f\in C_0(V).$
We claim that $(P_tf)^2\in D(\mathcal{L})$ for any $t\geq 0.$ By the assumption $(A_2),$ \cite[Theorem~5]{KellerLenz12} yields that $$D(\mathcal{L})=\{u\in \ell^2(V,m):\Delta u\in \ell^2(V,m)\}.$$ On one hand, one can see that $(P_t f)^2\in \ell^2(V,m)$ since $$\|P_tf\|_{\ell^p(V,m)}\leq \|f\|_{\ell^p(V,m)},\quad \forall\ p\in [1,\infty].$$ On the other hand, $$\Delta (P_t f)^2=2P_t f\Delta P_t f+2\Gamma(P_t f).$$ Note that $\Delta P_t f\in \ell^2(V,m)$ since $P_t f\in D(\mathcal{L}).$ Moreover, by \cite[Proposition~3.7]{HuaLin17}, $$\|\Gamma(P_t f)\|_{\ell^1(V,m)}\leq \|\Gamma(f)\|_{\ell^1(V,m)}<\infty.$$ Hence $\Gamma(P_t f)\in \ell^2(V,m)$ which follows from $\ell^1(V,m)\subset \ell^2(V,m),$ derived from the assumption $(A_2).$ Combining these facts,  we have $\Delta(P_t f)^2\in \ell^2(V,m)$ and prove the claim.

Given $t>0,$ set $\phi(s,x)=P_s(P_{t-s}f)^2(x),\ 0\leq s\leq t.$ Direct calculation shows that
\begin{eqnarray*}\frac{d}{ds}\phi&=&\Delta P_s(P_{t-s}f)^2-P_s(2P_{t-s}f\Delta P_{t-s}f)\\
&=&P_s\left[\Delta (P_{t-s}f)^2-2P_{t-s}f\Delta P_{t-s}f\right]=2P_{s}\Gamma(P_{t-s}f),
\end{eqnarray*} where the second equality follows from that $\Delta P_s(P_{t-s}f)^2=P_s \Delta (P_{t-s}f)^2$ since $(P_{t-s}f)^2\in D(\mathcal{L}),$ proved in the previous claim. Using $(b)$ in Theorem~\ref{thm:main2} with the approximation of $P_{t-s}f$ by finitely supported functions,
$$\frac{d}{ds}\phi=2P_{s}\Gamma(P_{t-s}f)\geq 2\Gamma(P_sP_{t-s}f)=2\Gamma(P_t f).$$ Integration over $s$ on $[0,t]$ gives the result $(d)$. This proves the theorem.

\end{proof}

\begin{lemma}\label{lem:lem1}  Let $G=(V,E,m,\mu)$ be a weighted graph satisfying the assumption $(A)$ and the $\CCD(0,\infty)$ condition. Then for any bounded harmonic function $f,$ $P_tf=f.$
\end{lemma}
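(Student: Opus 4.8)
The plan is to prove $P_tf=f$ (where $P_tf$ denotes the natural $\ell^\infty(V,m)$-extension of the semigroup, which is consistent in the two cases below) by reducing the identity, in both alternatives of assumption $(A)$, to a single discrete Green formula in which the only structural input is the harmonicity $\Delta f=0$. Under $(A_1)$ this is immediate: the condition \eqref{eq:bounded} also makes $\Delta$ bounded on $\ell^\infty(V,m)$, since $\|\Delta g\|_{\ell^\infty}\le 2\bigl(\sup_x m(x)^{-1}\sum_{y\sim x}\mu_{xy}\bigr)\|g\|_{\ell^\infty}$, so $P_t=e^{t\Delta}=\sum_{k\ge0}\tfrac{t^k}{k!}\Delta^k$ converges in operator norm on $\ell^\infty(V,m)$ and agrees there with the $\ell^2$-semigroup; since $\Delta f=0$ gives $\Delta^kf=0$ for all $k\ge1$, we read off $P_tf=f$.

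The substance is the case $(A_2)$, where $\Delta$ may be unbounded and $f$ need neither lie in $\ell^2(V,m)$ nor have finite Dirichlet energy. Here I would test against $C_0(V)$: writing $\langle u,v\rangle_m=\sum_x u(x)v(x)m(x)$, it suffices to show $\langle P_tf,\phi\rangle_m=\langle f,\phi\rangle_m$ for all $\phi\in C_0(V)$, since taking $\phi$ supported at a single vertex forces $P_tf=f$ pointwise. Using the symmetry of $P_t$ and the fact that it extends to a strongly continuous contraction semigroup on $\ell^1(V,m)$ whose generator acts on $C_0(V)$ as $\Delta$, I rewrite $\langle P_tf,\phi\rangle_m=\langle f,P_t\phi\rangle_m=:\psi(t)$ and differentiate, getting $\psi'(t)=\langle f,P_t\Delta\phi\rangle_m=\langle f,\Delta(P_t\phi)\rangle_m$; every pairing here is absolutely convergent because $f\in\ell^\infty(V,m)$ while $P_t\phi$ and $\Delta(P_t\phi)=P_t\Delta\phi$ lie in $\ell^1(V,m)$. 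The lemma then follows once I establish the Green identity
\[
\langle f,\Delta(P_t\phi)\rangle_m=\langle \Delta f,P_t\phi\rangle_m=0,
\]
for then $\psi$ is constant and $\psi(t)=\psi(0)=\langle f,\phi\rangle_m$.

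To prove this identity I would regularize $f$ with the completeness cut-offs $\eta_k$. Setting $g=P_t\phi$ and noting $f\eta_k\in C_0(V)$, the finite-support Green formula gives $\langle f\eta_k,\Delta g\rangle_m=\langle\Delta(f\eta_k),g\rangle_m$, and the discrete Leibniz rule together with $\Delta f=0$ rewrites the right-hand side as $\langle f\,\Delta\eta_k,g\rangle_m+2\langle\Gamma(f,\eta_k),g\rangle_m$, a pure boundary contribution supported on the transition region of $\eta_k$. As $k\to\infty$, dominated convergence (legitimate since $\Delta g\in\ell^1(V,m)$) identifies the left-hand side with $\langle f,\Delta g\rangle_m$, so it remains to show the boundary contribution vanishes. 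The two ingredients for this are the pointwise bound $\Gamma(\eta_k)\le\frac1k$ from completeness and the $\ell^1$-contraction $\|\Gamma(P_t\phi)\|_{\ell^1(V,m)}\le\|\Gamma(\phi)\|_{\ell^1(V,m)}<\infty$ of \cite[Proposition~3.7]{HuaLin17}: a Cauchy--Schwarz estimate bounding the error by $\|f\|_{\ell^\infty}$ times a pairing of $\Gamma(\eta_k)^{1/2}$ against $\Gamma(g)^{1/2}$, localized to the transition region where the tail energy of $g$ concentrates, should drive it to $0$.

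I expect this last estimate to be the main obstacle. It is exactly the step where an integration by parts must be carried out against a bounded harmonic function of possibly infinite Dirichlet energy, and the whole point of the completeness hypothesis in $(A_2)$ is to render the boundary term negligible despite $\Gamma(f)$ and $\deg_\mu/m$ being uncontrolled; making the localization quantitative, so that the escaping support of $\Gamma(\eta_k)$ beats any growth, is the delicate part and is where I would lean on the machinery of \cite{HuaLin17}. As a fallback I would keep in mind the route through uniqueness of bounded solutions of the discrete heat equation: both $P_tf$ and the time-independent function $f$ solve $\partial_t u=\Delta u$ with datum $f$, so $P_tf=f$ would follow from stochastic completeness, which one expects to hold under $\CCD(0,\infty)$ together with $(A)$.
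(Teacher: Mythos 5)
Your $(A_1)$ case is fine, and the ``fallback'' you mention in your last sentence is in fact exactly the paper's proof: by Theorem~\ref{thm:main2}(b) the semigroup is stochastically complete --- this is \cite[Theorem~1.2]{HuaLin17}, so it is available rather than merely ``expected'' --- hence bounded solutions of $\partial_t u=\Delta u$ are unique, and since both $P_tf$ and the time-independent function $f$ are bounded solutions with initial datum $f$, they coincide. Had you led with that, the lemma would be done in three lines; as written, the decisive ingredient of the fallback is left unverified.

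The route you actually develop under $(A_2)$ has a genuine gap at exactly the step you flag, and I do not believe the estimates you name can close it. The boundary contribution is $\langle f\,\Delta\eta_k,g\rangle_m+2\langle\Gamma(f,\eta_k),g\rangle_m$ with $g=P_t\phi$, while completeness controls only $\Gamma(\eta_k)\le 1/k$. Pointwise Cauchy--Schwarz gives $|\Delta\eta_k(x)|\le\sqrt{2\Gamma(\eta_k)(x)\,\mathrm{Deg}(x)}$ and $|\Gamma(f,\eta_k)(x)|\le\sqrt{\Gamma(f)(x)\Gamma(\eta_k)(x)}$ with $\Gamma(f)(x)\le 2\|f\|_{\ell^\infty}^2\,\mathrm{Deg}(x)$, where $\mathrm{Deg}(x)=m(x)^{-1}\sum_{y\sim x}\mu_{xy}$ is unbounded under $(A_2)$. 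So after extracting the factor $k^{-1/2}$ you are left with
\[
\sum_{x\in V} m(x)\,\sqrt{\mathrm{Deg}(x)}\,|g(x)|
\]
(or the analogous sum with $\sqrt{\Gamma(g)}$ in place of $|g|$), and neither $g\in\ell^1(V,m)$ nor $\|\Gamma(P_t\phi)\|_{\ell^1(V,m)}\le\|\Gamma(\phi)\|_{\ell^1(V,m)}$ makes such a degree-weighted sum finite, let alone uniformly small in $k$; it need not even converge absolutely. Trying the smarter bilinear splitting, which replaces the boundary term by a pairing of $\sqrt{\Gamma(\eta_k)}$ against $\sqrt{\Gamma(g)}$, leaves you with $k^{-1/2}\sum_x m(x)\sqrt{\Gamma(g)(x)}$, and an $\ell^1$ bound on $\Gamma(g)$ does not control the $\ell^1$ norm of its square root on an infinite-measure space. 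In short, completeness tames $\Gamma(\eta_k)$ but not $\Delta\eta_k$ or the degree weight, and the localization you hope for is not quantitative. I recommend abandoning the Green-formula route and promoting your fallback to the proof, supplying the missing ingredient by the citation above.
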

\begin{proof} By $(b)$ in Theorem~\ref{thm:main2}, the semigroup $P_t$ is stochastically complete, see \cite[Theorem~1.2]{HuaLin17}. So that any bounded solution to the heat equation $\partial_t u=\Delta u$ is unique, see \cite[Theorem~1.5.1]{Huang11} or \cite[Theorem~1]{KellerLenz12}.
The lemma follows from the same argument as in the proof of Lemma~\ref{lem:ptf}.
\end{proof}

Now we can prove the Liouville theorem for bounded harmonic functions for non-negatively curved graphs.
\begin{proof}[Proof of Theorem~\ref{thm:main1}] By the heuristic argument \eqref{eq:argument} in the introduction and Lemma~\ref{lem:lem1},
it suffices to prove that $(d)$ in Theorem~\ref{thm:main2} holds for any bounded function $f.$ By the completeness of the graph, there exists cut-off functions $\{\eta_k\}_{k=1}^\infty\subset C_0(V)$ satisfying \eqref{d:complete}. Set $f_k:=f\cdot \eta_k,$ for $k\geq1.$ Note that $f_k\to f,$ $k\to\infty,$ pointwise. By $(d)$ in Theorem~\ref{thm:main2},
\begin{equation}\label{eq:fork}2t \Gamma(P_tf_k)\leq P_t(f_k^2)-(P_t f_k)^2.\end{equation} Since $f$ is bounded, the dominated convergence theorem yields that $$P_t ((f_k)^a)\to P_t (f^a),\quad k\to\infty,\quad a=1,2.$$ Moreover, noting that the graph is locally finite, for any $x\in V,$ we have $$\Gamma(P_t(f_k))(x)\to \Gamma(P_tf)(x),\quad\ k\to\infty,$$ since both sides involve only finitely many summands. Passing to the limit, $k\to\infty,$ in \eqref{eq:fork}, we prove that $(d)$ in Theorem~\ref{thm:main2} holds for any bounded function $f.$ Hence the theorem follows.
\end{proof}

{\bf Acknowledgements.}
The author is supported by NSFC, grant no. 11401106.

\bibliography{Stoch2121}
\bibliographystyle{alpha}

\end{document}